\newtheorem{theorem}{\protect\theoremname}[section]
\newtheorem{definition}[theorem]{\protect\definitionname}
\newtheorem{lemma}[theorem]{\protect\lemmaname}
\newtheorem{proposition}[theorem]{\protect\propositionname}
\newtheorem{corollary}[theorem]{\protect\corollaryname}
\newtheorem{remark}[theorem]{\protect\remarkname}
\newtheorem{problem}[theorem]{\protect\problemname}
\newtheorem{notation}[theorem]{\protect\notationname}
\providecommand{\corollaryname}{Corollary}
\providecommand{\claimname}{Claim}
\providecommand{\definitionname}{Definition}
\providecommand{\lemmaname}{Lemma}
\providecommand{\notationname}{Notation}
\providecommand{\remarkname}{Remark}
\providecommand{\problemname}{Problem}
\providecommand{\propositionname}{Proposition}
\providecommand{\examplename}{Example}
\providecommand{\theoremname}{Theorem}
\newcommand{\N}{\mathbb{N}}
\newcommand{\slantfrac}[2]{\,^#1\!/_#2}
\newcommand{\mc}{\mathcal}
\newcommand{\fsd}{F_{\sigma\delta}}
\newcounter{vkNoteCounter}
\begin{document}
\begin{titlepage}
\thispagestyle{empty}
\title{Absolute $F_{\sigma\delta}$ spaces}
\date{}
\author{Ondřej Kalenda\thanks{This work was supported by the research grant GAČR 17-00941S. The second author was further supported by the research grant GA UK No. 915.} \qquad \qquad Vojtěch Kovařík\footnotemark[1] \\
	\emph{kalenda@karlin.mff.cuni.cz \quad kovarikv@karlin.mff.cuni.cz} 	\\
	\\
	Charles University \\
	Faculty of Mathematics and Physics \\
	Department of Mathematical Analysis \\
	\\
	Sokolovská 49, Karlín \\
	Praha 8, 186 00 \\
	Czech Republic \\
}
\maketitle
\begin{abstract}
We prove that hereditarily Lindelöf space which is $\fsd$ in some compactification is absolutely $\fsd$. In particular, this implies that any separable Banach space is absolutely $\fsd$ when equipped with the weak topology.
\end{abstract}
\end{titlepage}

\section{Introduction}
Throughout the paper, all spaces will be Tychonoff. Central to the topic of our paper is the following definition:
\begin{definition}\label{definition: F sigma delta spaces}
Let $X$ be a Tychonoff topological space. We say that $X$ is an \emph{$\fsd$ space} if there exists a compactification $cX$ of $X$, such that $X\in\fsd(cX)$.

We say that $X$ is an \emph{absolute $\fsd$ space} (or that $X$ is \emph{absolutely $\fsd$}) if $X\in\fsd(cX)$ holds for every compactification $cX$ of $X$.
\end{definition}
Note that $X$ is absolutely $\fsd$ if and only if $X\in\fsd(Y)$ holds for every Tychonoff topological space $Y$ in which $X$ is embedded.

If, in the above definition, we replace the class $\fsd$ by $G_\delta$, we get the definition of the well known concept of Čech-completeness -- however, in such a case the situation is less complicated, because every Čech-complete space is automatically absolutely $G_\delta$. Internal characterization Čech-complete spaces was given by Zdeněk Frolík, who also gave a characterization of $\fsd$ spaces in terms of complete sequences of covers (see Definition \ref{def: complete seq. of covers} below). He then asked for a description of those spaces which are \emph{absolutely} $\fsd$ (Problem 1 in \cite{frolik1963descriptive}), and this problem is still open.
 
However, Frolík did not know whether there actually exist non-absolute $\fsd$ spaces. This part of the problem was solved later by Talagrand, who found an example of such a space (\cite{talagrand1985choquet}). Thus, we formulate Frolík's problem as follows:
\begin{problem} \label{problem: 1}
Among all $\fsd$ spaces, describe those which are absolutely $\fsd$.
\end{problem}
If we are unable to completely determine the answer to Problem \ref{problem: 1}, the next best thing to do is to find a partial answer to Problem \ref{problem: 2} for as many spaces as possible.
\begin{problem} \label{problem: 2}
Let $X$ be a (possibly non-absolute) $\fsd$ space. Describe those compactifications of $X$ in which it is $\fsd$.
\end{problem}

In Section \ref{section: fsd topological spaces}, we give a partial answer to Problem \ref{problem: 2} by showing that if a $X$ is $\fsd$ in some compactification $cX$, it is automatically $\fsd$ in all larger compactifications (which is easy) and also in all compactification which are not much smaller than $cX$ (see Corollary \ref{corollary: countable quotients} for the details).

In Proposition \ref{proposition: fs disjoint covers and abs fsd} we give a partial answer to Problem \ref{problem: 1} by finding a sufficient condition for a space to be absolutely $\fsd$. This condition is similar in flavor to Frolík's characterization of $\fsd$ spaces. Applying this result, we get that hereditarily Lindelöf $\fsd$ spaces are absolutely $\fsd$ (Theorem \ref{theorem: countable network}) and that separable Banach spaces are absolutely $\fsd$ in the weak topology (Corollary \ref{corollary: banach spaces}).

In the rest of the introductory section we collect some known results and background information.

We could adapt Definition \ref{definition: F sigma delta spaces} for the lower classes of Borel hierarchy, where we have the following standard results. Their proof consists mostly of using the fact that continuous image of a compact space is compact.
\begin{remark}
Let $X$ be a topological space.
\begin{enumerate}
	\item $X$ is absolutely closed $\iff$ $X$ is compact.
	\item $X$ is absolutely $F_\sigma$ $\iff$ $X$ is $\sigma$-compact.
	\item $X$ is absolutely open $\iff$ $X$ is locally compact.
	\item $X$ is absolutely $G_\delta$ $\iff$ $X$ is Čech-complete.
\end{enumerate}
In the first two cases, $X$ being closed ($F_\sigma$) in some compactification automatically implies that $X$ is closed ($F_\sigma$) in every Tychonoff space where it is embedded. For open and $G_\delta$ spaces, we only get that $X$ is open ($G_\delta$) in those Tychonoff spaces where it is densely embedded.
\end{remark}

As shown in \cite{talagrand1985choquet}, not every $\fsd$ space is absolutely $\fsd$. This means that the class of $\fsd$ sets is the first one for which it makes sense to study Problem \ref{problem: 2}, which is one of the reasons for our interest in this particular class. However, Talagrand's is the only result of this kind (as far as the author knows), and not much else is known about `topologically' absolute $\fsd$ spaces. In \cite{kovarik@brooms}, topological absoluteness is studied for general $\mc F$-Borel classes, providing more examples based on Talagrand's construction and also some theoretical results.

Several authors have investigated slightly different notions of absoluteness for $\fsd$ spaces. Recall that in separable metrizable setting, $\fsd$ sets coincide with $\textrm{alg}\left(F\right)_{\sigma\delta}$ sets (where $\textrm{alg}\left(F\right)$ is the algebra generated by closed sets). As shown in \cite{holicky2003perfect}, the class of $\textrm{alg}\left(F\right)_{\sigma\delta}$ sets is absolute (in the sense that if a set is in $\textrm{alg}\left(F\right)_{\sigma\delta}(cX)$ for some compactification $cX$, it is automatically of this same class in every Tychonoff space where it is embedded).

In \cite{marciszewski1997absolute} and \cite{junnila1998characterizations}, the authors study metric spaces which are absolutely $\fsd$ `in a metric sense' - that is, $X\in\fsd (Y)$ for every \emph{metrizable} space $Y$ in which $X$ is embedded. In \cite{junnila1998characterizations}, the authors give a characterization of `metric absoluteness' for $\fsd$ spaces in terms of complete sequences of covers - namely that $X$ is absolutely $\fsd$ in the metric sense if and only if it has a complete sequence of $\sigma$-discrete covers.

Unfortunately, this result is not useful in our setting, because Talagrand's space is an example of non-metrizable space, which does have such a complete sequence, but it is not absolutely $\fsd$ (in our - topological - sense).

In \cite{kovarik@brooms}, it is shown that if a metric space is separable, its complexity is automatically absolute even in the topological sense. For $\fsd$ spaces, this is a special case of Theorem \ref{theorem: countable network}.

\section{Compactifications}\label{section: compactifications}
We recall the standard definitions of compactifications and their partial ordering. By \emph{compactification} of a topological space $X$ we understand a pair $(cX,\varphi)$, where $cX$ is a compact space and $\varphi$ is a homeomorphic embedding of $X$ onto a dense subspace  of $cX$. Symbols $cX$, $dX$ and so on will always denote compactifications of $X$. 

Compactification $(cX,\varphi)$ is said to be \emph{larger} than $(dX,\psi)$, if there exists a continuous mapping $f : cX\rightarrow dX$, such that $\psi = f \circ \varphi$. We denote this as $ cX \succeq dX $. Recall that for a given $T_{3\slantfrac{1}{2}}$ topological space $X$, its compactifications are partially ordered by $\succeq$ and Stone-Čech compactification $\beta X$ is the largest one.

Often, we encounter a situation where $X\subset cX$ and the corresponding embedding is identity. In this case, we will simply write $cX$ instead of $(cX,\textrm{id}|_X)$. Lastly, whenever we write symbols $cX$ or $dX$, we will automatically assume that they denote some compactifications of $X$. Much more about this topic can be found in many books - for a more recent one, see for example \cite{freiwald2014introduction}.

In the introduction, we defined the notion of being an $\fsd$ space and an absolute $\fsd$ space. Having defined the partial order $\succeq$ on the class of compactifications of $X$, we note the basic facts related to Problem \ref{problem: 2}. The proof of this remark consists of using the fact that continuous preimage of an $\fsd$ set is an $\fsd$ set.
\begin{remark}\label{remark: absolute complexity}
For a topological space $X$, we have the following:
\begin{itemize}
	\item $X$ is an $\fsd$ space $\iff$ $X\in\fsd(\beta X)$;
	\item $X$ is an absolute $\fsd$ space $\iff$ $X\in\fsd(cX)$ for every $cX$;
	\item $X\in\fsd (dX)$, $cX\succeq dX$ $\implies$ $X\in\fsd(cX)$.
\end{itemize}
\end{remark}

\begin{notation}
Let $X$ be a topological space, suppose that two of its compactifications satisfy $dX\preceq cX$ and that $\varphi : cX\rightarrow dX$ is the mapping which witnesses this fact. We denote 
\[ \mathcal{F}\left( cX, dX\right):=\left\lbrace \varphi^{-1} \left(x\right)\big | \ x\in dX, \ \varphi^{-1} \left(x\right) \textrm{ is not a singleton} \right\rbrace . \]
\end{notation}
In this sense, every compactification $dX$ smaller than $cX$ corresponds to some disjoint system $\mathcal{F}$ of compact subsets of $cX\setminus X$. Conversely, some disjoint systems of compact subsets of $cX\setminus X$ correspond to quotient mappings, which correspond to compactifications smaller than $cX$. Not every such system $\mc F$ corresponds to a compactification, but surely every finite (disjoint, consisting of compact subsets of $cX\setminus X$) $\mc F$ does.

\section{$\fsd$ spaces}
In this section, we will list some results related to $\fsd$ spaces.
\subsection{Banach spaces}
Unless otherwise specified, a Banach space $X$ (resp. its second dual), will always be equipped with weak (resp. $w^\star$) topology. In \cite{argyros2008talagrand}, a Banach space $X$ is said to be $F_{\sigma\delta}$ if it is is an $F_{\sigma\delta}$ subset of $X^{\star\star}$. Note that the space $X^{\star\star}$ is always $\sigma$-compact, so it is $F_\sigma$ in $\beta X^{\star\star}$. Consequently, any $\fsd$ Banach space is automatically an $\fsd$ space (in the sense of Definition \ref{definition: F sigma delta spaces}).

An important class of Banach spaces are the spaces which are weakly compactly generated (WCG). Recall that a Banach space $X$ is said to be WCG, if there exists a set $K\subset X$ which is weakly compact, such that $\textrm{span}(K)$ is dense in $(X,||\cdot||)$. Clearly all separable spaces and all reflexive spaces are WCG. The canonical example of non-separable non-reflexive WCG space is the space $c_0(\Gamma)$ for uncountable index set $\Gamma$. For more information about WCG spaces, see for example \cite{fabian2013functional}. The reason for our interest in WCG spaces is the following result (\cite[Theorem 3.2]{talagrand1979espaces}):
\begin{proposition}\label{proposition: WCG spaces are fsd}
Any WCG space is an $\fsd$ Banach space.
\end{proposition}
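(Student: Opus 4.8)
The plan is to write $X$ down explicitly as an $F_{\sigma\delta}$ subset of $(X^{\star\star},w^\star)$ built from a single weakly compact generating set. First I would normalize the generator: choosing a weakly compact $K_0\subset X$ with $\overline{\mathrm{span}}(K_0)=X$, replace it by the closed absolutely convex hull $K:=\overline{\mathrm{aco}}(K_0)$, which is again weakly compact by the Krein--Šmulian theorem. Since $K$ is absolutely convex, every finite linear combination of elements of $K_0$ lies in $nK$ for $n$ large enough, so $\mathrm{span}(K_0)\subseteq\bigcup_{n\in\N}nK$, and hence $\bigcup_{n\in\N}nK$ is norm-dense in $X$.

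Next I would collect the compactness facts needed inside $X^{\star\star}$. Under the canonical embedding $X\hookrightarrow X^{\star\star}$ the weak topology of $X$ is the restriction of $w^\star=\sigma(X^{\star\star},X^\star)$, so each $nK$ is $w^\star$-compact, hence $w^\star$-closed in $X^{\star\star}$. The ball $B_{X^{\star\star}}$ is $w^\star$-compact by Banach--Alaoglu, and addition is $w^\star$-continuous on $X^{\star\star}\times X^{\star\star}$, so each set
\[ C_{n,m}:=nK+\tfrac1m B_{X^{\star\star}} \]
is $w^\star$-compact, being a continuous image of the compact set $nK\times\tfrac1m B_{X^{\star\star}}$.

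The heart of the proof is then the identity
\[ X=\bigcap_{m\in\N}\bigcup_{n\in\N}C_{n,m}, \]
which exhibits $X$ as an $F_{\sigma\delta}$ subset of $(X^{\star\star},w^\star)$, i.e. as an $\fsd$ Banach space (and hence, as noted above, as an $\fsd$ space in the sense of Definition \ref{definition: F sigma delta spaces}). For ``$\subseteq$'': given $x\in X$ and $m\in\N$, norm-density of $\bigcup_n nK$ produces $n$ and $y\in nK$ with $\|x-y\|<\tfrac1m$, so $x=y+(x-y)\in C_{n,m}$. For ``$\supseteq$'': if $z\in X^{\star\star}$ belongs to $\bigcup_n C_{n,m}$ for every $m$, write $z=y_m+w_m$ with $y_m\in X$ and $\|w_m\|_{X^{\star\star}}\le\tfrac1m$; then $y_m\to z$ in the norm of $X^{\star\star}$, and since $X$ is norm-closed in $X^{\star\star}$ we get $z\in X$.

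I expect the one point that genuinely needs attention is the appearance of the large ball $B_{X^{\star\star}}$, rather than $B_X$, in the definition of $C_{n,m}$: using $B_X$ would keep the sets inside $X$ but destroy their $w^\star$-compactness, while using $B_{X^{\star\star}}$ restores $w^\star$-compactness at the cost of adding points outside $X$ --- and it is precisely the intersection over $m$ together with the norm-completeness of $X$ that discards those extra points. The remaining ingredients (Krein--Šmulian, Banach--Alaoglu, continuity of addition, and the agreement of the weak and $w^\star$ topologies on $X$) are all standard.
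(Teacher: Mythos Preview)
Your argument is correct and is essentially the classical proof of this fact. Note, however, that the paper does not supply its own proof of this proposition: it is quoted as a known result, attributed to Talagrand \cite[Theorem 3.2]{talagrand1979espaces}, and only used as input for Corollary~\ref{corollary: banach spaces}. So there is no ``paper's proof'' to compare against; what you have written is precisely the standard argument one would find behind that citation, and all the ingredients you invoke (Krein--\v{S}mulian for the weak compactness of $K=\overline{\mathrm{aco}}(K_0)$, Banach--Alaoglu for $B_{X^{\star\star}}$, joint $w^\star$-continuity of addition, norm-closedness of $X$ in $X^{\star\star}$) are used correctly. Your closing remark about why $B_{X^{\star\star}}$ rather than $B_X$ must appear in $C_{n,m}$ is exactly the right point to emphasize.
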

In fact, even every subspace of a WCG space is $\fsd$ in its second dual. Talagrand has found an example of an $\fsd$ Banach space which is not a subspace of a WCG space \cite{talagrand1979espaces}. This space belongs to a more general class of weakly $\mc K$-analytic spaces. A problem which had been open for a long time is whether every weakly $\mc K$-analytic space is an $\fsd$ Banach space. A counterexample has been found in \cite{argyros2008talagrand} (as well as some sufficient conditions for a weakly $\mc K$-analytic space to be an $\fsd$ Banach space). The problem which still remains unsolved is whether weakly $\mc K$-analytic spaces are topologically $\fsd$.

\subsection{Topological spaces}\label{section: fsd topological spaces}

\begin{proposition}\label{proposition: G delta characterization}
Suppose that $X\in F_{\sigma\delta}\left(cX\right)$ and $ dX \preceq cX $. Then $X\in F_{\sigma\delta}\left(dX\right)$ holds if and only if there exists a sequence $(H_n)_n$ of $F_\sigma$ subsets of $cX$, such that
\[ \left(\forall F\in\mathcal{F}\left(cX,dX\right)\right)\left(\exists n\in\N\right): X\subset H_n \subset cX \setminus F. \]
\end{proposition}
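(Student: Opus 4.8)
The plan is to work with the canonical quotient map $\varphi\colon cX\to dX$ witnessing $dX\preceq cX$, and to exploit two elementary properties of it. First, the standard facts that $\varphi$ restricts to the identity on $X$, that $\varphi^{-1}(X)=X$, and hence that the elements of $\mathcal F(cX,dX)$ are exactly the non-singleton fibres $\varphi^{-1}(z)$ with $z\in dX\setminus X$ — each a compact subset of $cX\setminus X$. Second, that both $\varphi$ and $\varphi^{-1}$ preserve the class of $F_\sigma$ sets: for $\varphi^{-1}$ this is just continuity, while for $\varphi$ it follows because an $F_\sigma$ subset of the compact space $cX$ is $\sigma$-compact and continuous images of compact sets are compact, so their images are $\sigma$-compact, hence $F_\sigma$, in $dX$.

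For the implication ``$\Rightarrow$'', write $X=\bigcap_{m}C_m$ with each $C_m$ an $F_\sigma$ subset of $dX$ and put $H_m:=\varphi^{-1}(C_m)$. Then each $H_m$ is $F_\sigma$ in $cX$ and contains $\varphi^{-1}(X)=X$. If $F\in\mathcal F(cX,dX)$, say $F=\varphi^{-1}(z)$ with $z\in dX\setminus X$, then $z\notin C_m$ for some $m$, so $F=\varphi^{-1}(z)\subset cX\setminus\varphi^{-1}(C_m)=cX\setminus H_m$; that is, $X\subset H_m\subset cX\setminus F$, as required.

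For the implication ``$\Leftarrow$'', let $(H_n)_n$ be a sequence as in the statement. We may assume $X\subset H_n$ for every $n$: deleting the members not containing $X$ leaves the displayed covering property intact (the index it provides always satisfies $X\subset H_n$), and if this deletion removes every member then $\mathcal F(cX,dX)=\emptyset$ and we may instead take $H_n=cX$ for all $n$. Using the hypothesis $X\in\fsd(cX)$, write $X=\bigcap_k B_k$ with each $B_k$ an $F_\sigma$ subset of $cX$. We then claim that, as subsets of $dX$,
\[ X=\bigcap_k\varphi(B_k)\ \cap\ \bigcap_n\varphi(H_n). \]
The inclusion ``$\subset$'' is immediate from $X\subset B_k$, $X\subset H_n$ and $\varphi^{-1}(x)=\{x\}$ for $x\in X$. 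For ``$\supset$'', suppose $z$ is in the right-hand set but $z\notin X$. If $\varphi^{-1}(z)$ is a singleton $\{y\}$, then $y\notin X$, hence $y\notin B_k$ for some $k$, so $\varphi^{-1}(z)\cap B_k=\emptyset$ and therefore $z\notin\varphi(B_k)$ — a contradiction. Otherwise $F:=\varphi^{-1}(z)\in\mathcal F(cX,dX)$, so some $n$ satisfies $H_n\subset cX\setminus F$, whence $\varphi^{-1}(z)\cap H_n=\emptyset$ and $z\notin\varphi(H_n)$ — again a contradiction. This establishes the claim, and since each $\varphi(B_k)$ and each $\varphi(H_n)$ is $F_\sigma$ in $dX$, it follows that $X\in\fsd(dX)$.

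The routine parts are the facts about $\varphi$ collected at the outset. The one point requiring a little care is the shape of the intersection in the ``$\Leftarrow$'' direction: the family $\{\varphi(B_k)\}$ alone need not have intersection $X$, because $\varphi$ may collapse some $F\in\mathcal F(cX,dX)$ to a point of $dX\setminus X$ lying in every $\varphi(B_k)$; the role of the sets $\varphi(H_n)$ is precisely to excise exactly those points, and recognising that the two countable families jointly trim $\bigcap_k\varphi(B_k)$ down to $X$ is the crux of the proof.
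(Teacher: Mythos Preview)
Your proof is correct and follows essentially the same route as the paper: pull back an $\fsd$ representation of $X$ in $dX$ along $\varphi$ for the forward direction, and for the converse push forward both the given $\fsd$ representation in $cX$ and the sets $H_n$ via $\varphi$, then check that the resulting countable intersection of $F_\sigma$ sets in $dX$ equals $X$ by a case split on whether a fibre is a singleton. Your argument is in fact slightly more careful than the paper's in one respect: you explicitly reduce to the case $X\subset H_n$ for all $n$ before forming $\bigcap_n\varphi(H_n)$, whereas the paper tacitly assumes this when asserting $X\subset\bigcap H'_n$.
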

\begin{proof}
Denote by $\varphi$ the map witnessing that $dX\preceq cX$. \\*
$\implies $: Assume that $X=\bigcap F_n$, where the sets $F_n$ are $F_\sigma$ in $dX $. Denote $H_n:=\varphi^{-1}(F_n)$. Clearly, $H_n\subset cX$ are $F_\sigma$ sets containing $X$. Let $F$ be $\mathcal{F}\left(cX,dX\right)$, that is, $F=\varphi^{-1}(y)$ for some $y\in dX\setminus X$. By the assumption, we have $X\subset F_n \subset dX\setminus \left\{y\right\}$ for some $n\in \mathbb N$. By definition of $\varphi$, we get the desired result:
\[ X=\varphi^{-1}\left(X\right)\subset \varphi^{-1}\left(F_n\right)=H_n\subset \varphi^{-1}\left(dX\setminus \left\{y\right\}\right)=\varphi^{-1}\left(X\right)\setminus \varphi^{-1}\left(y\right)=X\setminus F. \]
$\Longleftarrow$: Let the sequence of sets $H_n\subset cX$ be as in the proposition. We know that $X=\bigcap F_n$ for some $F_\sigma$ sets $F_n$. We now receive sets $F'_n:=\varphi(F_n)$ and $H'_n:=\varphi(H_n)$, $n\in\N$, all of which are $F_\sigma$ in $dX$. Clearly, we have
\[ X\subset\bigcap F'_n \cap \bigcap {H'_n}. \]
For the converse inclusion, let $y\in dX \setminus X$. If $\varphi^{-1}(y)$ is a singleton, we have $\varphi^{-1}(y) \subset cX\setminus F_n$ for some $n\in\N$, and therefore $y\notin \varphi(F_n)=F'_n$. If $\varphi^{-1}(y)$ is not a singleton, then $\varphi^{-1}(y)\in \mathcal{F}\left(cX,dX\right)$, so there exists some $n\in\N$, such that $H_n\subset X\setminus \varphi^{-1}(y)$. In this case, we have $y\notin \varphi(H_n)=H'_n$.
\end{proof}

Since any $\fsd$ space is Lindelöf, we can make use of the following lemma, which follows immediately from \cite[Lemma 14]{spurny2006solution}.
\begin{lemma}\label{lemma: fsd spaces are Fs separated}
Let $X$ be a Lindelöf subspace of a compact space $L$. Then for every compact set $K\subset L \setminus X$, there exists $H\in F_\sigma\left(L\right)$, such that $X\subset H\subset L\setminus K$.
\end{lemma}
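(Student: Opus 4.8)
The plan is to use the fact that a compact Hausdorff space is normal (in particular regular) together with the Lindelöf property of $X$. Since $K$ is compact, it is closed in $L$, and $K\cap X=\emptyset$. By regularity of $L$, for each point $x\in X$ we may choose an open set $U_x$ with $x\in U_x\subset\overline{U_x}\subset L\setminus K$ (equivalently, separate the point $x$ from the compact set $K$ by disjoint open sets and take the closure of the one around $x$).

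The family $\{U_x:x\in X\}$ is then a cover of $X$ by sets open in $L$. Because $X$ is Lindelöf, there is a countable subfamily $\{U_{x_n}:n\in\N\}$ with $X\subset\bigcup_{n}U_{x_n}$. Put $H:=\bigcup_{n}\overline{U_{x_n}}$. This is a countable union of closed subsets of $L$, so $H\in F_\sigma(L)$; it contains $\bigcup_{n}U_{x_n}$ and hence $X$; and since each $\overline{U_{x_n}}\subset L\setminus K$ we also have $H\subset L\setminus K$. Thus $H$ is as required.

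There is no real obstacle here: the only point that needs care is to choose the neighbourhoods $U_x$ with \emph{closure} avoiding $K$, since it is precisely this that makes the resulting countable union an $F_\sigma$ set contained in $L\setminus K$, rather than merely an open superset of $X$. The rest is a routine application of Lindelöfness and the normality of compact Hausdorff spaces; alternatively, one may simply invoke \cite[Lemma 14]{spurny2006solution}, as indicated in the statement.
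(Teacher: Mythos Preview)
Your argument is correct. The only delicate step --- choosing neighbourhoods whose \emph{closures} miss $K$, so that the resulting countable union is an $F_\sigma$ set in $L$ rather than merely open --- you have identified and handled properly; the passage from an $L$-open cover of $X$ to a countable subcover is justified by restricting to $X$ and using Lindel\"ofness there.

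As for comparison with the paper: there is essentially nothing to compare, since the paper does not prove the lemma at all but simply records that it follows from \cite[Lemma~14]{spurny2006solution}. Your direct argument via regularity of $L$ and Lindel\"ofness of $X$ is the standard self-contained proof and is exactly what one would expect to lie behind that citation.
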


Once we have Lemma \ref{lemma: fsd spaces are Fs separated}, Proposition \ref{proposition: G delta characterization} yields the following corollary, which gives a partial answer to Problem \ref{problem: 2}:
\begin{corollary}\label{corollary: countable quotients}
Suppose that $X$ is an $\fsd$ subspace of $cX$ and $dX\preceq cX$. Then $X$ is $\fsd$ in $dX$ as well, provided that the family $\mathcal{F}\left(cX,dX\right)$ is at most countable.
\end{corollary}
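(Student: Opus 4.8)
The plan is to apply Proposition~\ref{proposition: G delta characterization}, so the whole task reduces to producing a sequence $(H_n)_n$ of $F_\sigma$ subsets of $cX$ such that every $F\in\mathcal{F}(cX,dX)$ is separated from $X$ by some $H_n$. Since $\mathcal{F}(cX,dX)$ is assumed countable, write it as $\{F_m : m\in\N\}$ (allowing repetitions if the family is finite, or handling the finite case trivially). The idea is then to deal with each $F_m$ separately and collect the resulting sets into a single sequence.

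First I would fix $m\in\N$. Each $F\in\mathcal{F}(cX,dX)$ is by definition a compact subset of $cX\setminus X$, so in particular $F_m$ is a compact subset of the compact space $cX$ disjoint from $X$. Now $X$ is an $\fsd$ subspace of $cX$, hence Lindel\"of (as noted in the excerpt, every $\fsd$ space is Lindel\"of), so Lemma~\ref{lemma: fsd spaces are Fs separated} applies with $L=cX$ and $K=F_m$: it yields a set $H_m\in F_\sigma(cX)$ with $X\subset H_m\subset cX\setminus F_m$. Doing this for every $m$ produces a sequence $(H_m)_m$ of $F_\sigma$ subsets of $cX$.

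This sequence is exactly what Proposition~\ref{proposition: G delta characterization} requires: each $H_m$ is $F_\sigma$ in $cX$, we have $X\subset H_m$ for all $m$, and for any $F\in\mathcal{F}(cX,dX)$ there is an index $m$ with $F=F_m$, whence $H_m\subset cX\setminus F$. Therefore by (the ``$\Longleftarrow$'' direction of) Proposition~\ref{proposition: G delta characterization}, $X\in\fsd(dX)$, which is the assertion of the corollary. (If $\mathcal{F}(cX,dX)$ is empty, then $dX$ and $cX$ are homeomorphic over $X$ and the conclusion is immediate; if it is finite, one simply takes finitely many $H_m$'s, or pads the sequence by repeating one of them.)

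There is essentially no obstacle here: the corollary is a straightforward combination of the two preceding results, and the only thing one must check is that the hypotheses of each are met --- that $F_m$ is compact and disjoint from $X$ (true by the definition of $\mathcal{F}(cX,dX)$), and that $X$ is Lindel\"of (true since $X$ is $\fsd$). The countability of $\mathcal{F}(cX,dX)$ is used in precisely one place, namely to guarantee that the collection of separating sets $\{H_m\}$ can be arranged into a \emph{sequence}, as demanded by Proposition~\ref{proposition: G delta characterization}; without it the argument would instead only show that $X$ is separated from each fiber by an $F_\sigma$ set, which is not enough to conclude $X\in\fsd(dX)$.
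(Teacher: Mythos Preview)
Your proof is correct and matches the paper's approach exactly: the paper does not spell out a proof but simply remarks that the corollary follows by combining Proposition~\ref{proposition: G delta characterization} with Lemma~\ref{lemma: fsd spaces are Fs separated}, which is precisely what you do. Your write-up makes explicit the one point the paper leaves implicit, namely that $X$ is Lindel\"of because it is $\fsd$, so that Lemma~\ref{lemma: fsd spaces are Fs separated} applies to each compact fiber $F_m\subset cX\setminus X$.
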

In particular, this implies that there exists no such thing as a "minimal compactifications in which $X$ is $\fsd$" (unless, of course, $X$ is locally compact).

\section{Hereditarily Lindelöf spaces}\label{section: countable network}
In this section, we present the following main result:
\begin{theorem}\label{theorem: countable network}
Every hereditarily Lindelöf $\fsd$ space is absolutely $\fsd$.
\end{theorem}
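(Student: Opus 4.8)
The plan is to derive the theorem from Proposition~\ref{proposition: fs disjoint covers and abs fsd}: given a hereditarily Lindelöf $\fsd$ space $X$, I will produce a complete sequence of covers of $X$ by \emph{pairwise disjoint} sets, each of which is $F_\sigma$ in $X$, which is exactly the type of sufficient condition for absolute $\fsd$-ness furnished by that proposition. (An alternative is to argue one compactification at a time: reduce via Remark~\ref{remark: absolute complexity} to $X\in\fsd(\beta X)$, fix an arbitrary $dX$ with canonical surjection $\varphi : \beta X \to dX$, and use Proposition~\ref{proposition: G delta characterization} to reduce to the construction of a single countable family $(H_k)_k$ of $F_\sigma$ subsets of $\beta X$, each containing $X$, such that every non-singleton fibre of $\varphi$ misses some $H_k$ -- but manufacturing the $H_k$ from the data below is precisely the work packaged inside Proposition~\ref{proposition: fs disjoint covers and abs fsd}, so I prefer to invoke it directly.)

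To build the covers, fix a compactification $cX$ with $X\in\fsd(cX)$ and write $X=\bigcap_n A_n$ with $A_n=\bigcup_m C_n^m$, where each $C_n^m$ is closed in $cX$. Put $\mathcal C_n:=\{ C_n^m\cap X : m\in\N \}$, a countable cover of $X$ by closed subsets of $X$. The sequence $(\mathcal C_n)_n$ is complete: if a filter $\mathcal G$ on $X$ is compatible with it -- for every $n$ some member of $\mathcal G$ is contained in an element of $\mathcal C_n$ -- then the $cX$-closures of the members of $\mathcal G$ form a family of closed subsets of the compact space $cX$ with the finite intersection property, hence have a common point $z$; for each $n$ one gets $z\in\overline{C_n^m\cap X}^{cX}\subset C_n^m\subset A_n$ for a suitable $m$, so $z\in\bigcap_n A_n=X$, and $z$ is a cluster point of $\mathcal G$ in $X$. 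Now disjointify within each level: set $D_n^1:=C_n^1\cap X$ and $D_n^m:=(C_n^m\cap X)\setminus\bigcup_{j<m}C_n^j$ for $m\ge 2$, and $\mathcal D_n:=\{ D_n^m : m\in\N \}$, a cover of $X$ by pairwise disjoint sets. This is where hereditary Lindelöfness enters, and it is the only place: since $X$ is regular (being Tychonoff) and hereditarily Lindelöf, every open subset of $X$ is $F_\sigma$ in $X$ -- cover such a set by open sets whose closures it contains and extract a countable subcover. Hence $D_n^m$, being the intersection of the closed set $C_n^m\cap X$ with the open set $X\setminus\bigcup_{j<m}C_n^j$, is $F_\sigma$ in $X$.

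Since each $\mathcal D_n$ refines $\mathcal C_n$, any filter compatible with $(\mathcal D_n)_n$ is also compatible with $(\mathcal C_n)_n$, so $(\mathcal D_n)_n$ is complete as well; and if Proposition~\ref{proposition: fs disjoint covers and abs fsd} additionally demands that consecutive covers refine one another, I would first replace $\mathcal C_n$ by the common refinement of $\mathcal C_1,\dots,\mathcal C_n$ before disjointifying. Applying Proposition~\ref{proposition: fs disjoint covers and abs fsd} to the complete sequence $(\mathcal D_n)_n$ of disjoint $F_\sigma$ covers then shows that $X$ is absolutely $\fsd$. I expect the only real care needed in this argument to lie in matching the precise formulation of ``complete sequence of covers'' used by Proposition~\ref{proposition: fs disjoint covers and abs fsd} (Definition~\ref{def: complete seq. of covers}) and in the -- routine but essential -- step that open subsets of $X$ are $F_\sigma$; hereditary Lindelöfness cannot be dropped, since the conclusion already fails for Talagrand's space. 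The genuine difficulty of the theorem is carried entirely by Proposition~\ref{proposition: fs disjoint covers and abs fsd}, whose task is to reconstruct an $\fsd$ representation of $X$ inside an arbitrary compactification out of these disjoint covers -- and \emph{disjointness} is exactly the feature that a merely $\sigma$-discrete complete sequence of covers (which Talagrand's example does possess) need not supply.
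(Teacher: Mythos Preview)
Your proposal is correct and follows essentially the same route as the paper: the paper factors the theorem through Lemma~\ref{lemma: countable network and disjoint cover} (which produces a complete sequence of countable disjoint $F_\sigma$ covers by disjointifying a complete sequence of countable closed covers, using hereditary Lindel\"ofness exactly as you do) and then applies Proposition~\ref{proposition: fs disjoint covers and abs fsd}. The only cosmetic difference is that the paper obtains the initial complete sequence of closed covers by citing Frol\'ik's characterization (Proposition~\ref{proposition: fsd iff complete sequence of covers}), whereas you rebuild it directly from an explicit $\fsd$ representation $X=\bigcap_n\bigcup_m C_n^m$ in some $cX$; your verification of completeness is precisely the easy direction of that characterization.
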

Note that every $\fsd$ space is Lindelöf (because it is $\mc K$-analytic), but the converse implication to Theorem \ref{theorem: countable network} does not hold - that is, not every absolutely $\fsd$ space is hereditarily Lindelöf. Indeed, any compact space which is not hereditarily normal is a counterexample. The proof of Theorem \ref{theorem: countable network} will require some technical preparation, but we can state an immediate corollary for Banach spaces:
\begin{corollary}\label{corollary: banach spaces}
Every separable Banach space is absolutely $\fsd$ (when equipped with the weak topology).
\end{corollary}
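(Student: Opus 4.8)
The plan is to assemble the statement from two ingredients already in place: Proposition~\ref{proposition: WCG spaces are fsd}, which tells us that WCG spaces are $\fsd$ Banach spaces, and Theorem~\ref{theorem: countable network}, which upgrades ``$\fsd$'' to ``absolutely $\fsd$'' in the presence of hereditary Lindelöfness. So, given a separable Banach space $X$ with its weak topology, I would verify exactly two things: (i) $(X,w)$ is an $\fsd$ space, and (ii) $(X,w)$ is hereditarily Lindelöf. Then Theorem~\ref{theorem: countable network} finishes the argument.

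For (i): every separable Banach space is WCG, since a countable dense subset of the unit ball, suitably scaled down to accumulate at $0$, has dense span and is (norm, hence weakly) compact. By Proposition~\ref{proposition: WCG spaces are fsd}, $X$ is an $\fsd$ Banach space, i.e. $X\in\fsd(X^{\star\star})$ in the $w^\star$ topology. Since $X^{\star\star}$ is $\sigma$-compact — it is the countable union of the balls $nB_{X^{\star\star}}$, each $w^\star$-compact by Banach--Alaoglu — it is $F_\sigma$ in $\beta X^{\star\star}$, and as continuous preimages of $\fsd$ sets are $\fsd$, we get $X\in\fsd(\beta X^{\star\star})$; by Remark~\ref{remark: absolute complexity} this means precisely that $X$ is an $\fsd$ space. (This is exactly the observation already recorded in the Banach-spaces subsection.)

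For (ii): the norm topology on $X$ is separable metrizable, hence has a countable base $\mathcal B$; as the weak topology is coarser than the norm topology, every weakly open set is norm open and thus a union of members of $\mathcal B$, so $\mathcal B$ is a countable network for $(X,w)$. A space with a countable network is hereditarily Lindelöf: given a subspace $Y$ and an open cover $\mathcal U$ of $Y$, for each $N\in\mathcal B$ with $N\cap Y$ contained in some member of $\mathcal U$ pick one such member; these countably many sets cover $Y$. Hence $(X,w)$ is hereditarily Lindelöf, and Theorem~\ref{theorem: countable network} applies. There is no real obstacle here — the corollary is a bookkeeping combination of preceding results; the only point to state carefully is the passage from ``$\fsd$ in $X^{\star\star}$'' to ``$\fsd$ space'', which is handled by the $\sigma$-compactness of $X^{\star\star}$ as above.
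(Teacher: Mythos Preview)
Your proof is correct and follows essentially the same route as the paper: invoke Proposition~\ref{proposition: WCG spaces are fsd} to see that a separable Banach space is an $\fsd$ space, observe that a countable norm-base is a network for the weak topology so $(X,w)$ is hereditarily Lindel\"of, and conclude via Theorem~\ref{theorem: countable network}. You supply more detail (why separable $\Rightarrow$ WCG, the $\sigma$-compactness step from $\fsd$ in $X^{\star\star}$ to $\fsd$ space, and the network-to-hereditarily-Lindel\"of argument), but the strategy is identical.
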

\begin{proof} 
By Proposition \ref{proposition: WCG spaces are fsd}, every separable Banach space $X$ is $\fsd$. The countable basis of the norm topology of $X$ is a network for the weak topology. The proposition follows from the fact that spaces with countable network are hereditarily Lindelöf.
\end{proof}

We will need the notion of complete sequence of covers:
\begin{definition}[Complete sequence of covers]\label{def: complete seq. of covers}
Let $X$ be a topological space. \emph{Filter} on $X$ is a family of subsets of $X$, which is closed with respect to supersets and finite intersections and does not contain the empty set. A point $x\in X$ is said to be an \emph{accumulation point} of a filter $\mc F$ on $X$, if each neighborhood of $x$ intersects each element of $\mc F$.

A sequence $\left( \mc F_n \right)_{n\in\mathbb N}$ of covers of $X$ is said to be \emph{complete}, if every filter which intersects all $\mc F_n$-s has an accumulation point in $X$.
\end{definition}

The connection between this notion and our topic is explained by Proposition \ref{proposition: fsd iff complete sequence of covers}. Note that a cover of $X$ is said to be \emph{closed} (\emph{open}, $F_\sigma$, \emph{disjoint}) if it consists of sets which are closed  in $X$ (open, $F_\sigma$, disjoint). As a slight deviation from this terminology, a cover of $X$ is said to be \emph{countable} if it contains countably many elements.
\begin{proposition}\label{proposition: fsd iff complete sequence of covers}Any topological space $X$ satisfies
\begin{enumerate}
\item $X$ is Čech-complete $\iff$ $X$ has a complete sequence of open covers,
\item $\begin{aligned}[t]
X \textrm{ is } \fsd & \iff X \textrm{ has a complete sequence of countable closed covers} \\
& \iff 	X \textrm{ has a complete sequence of countable } F_\sigma \textrm{ covers},
\end{aligned}$
\item $X$ is $\mc K$-analytic $\iff$ $X$ has a complete sequence of countable covers.
\end{enumerate}
\end{proposition}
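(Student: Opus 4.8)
The three statements are classical: (1) and (2) go back to Frolík \cite{frolik1963descriptive}, and (3) is a standard characterization of $\mc K$-analyticity, so in the paper we would largely cite them; what follows is the plan of a self-contained argument. Each equivalence splits into a ``representation $\Rightarrow$ covers'' half and a ``covers $\Rightarrow$ representation'' half, and in both halves the bridge is a compactification $cX$ (one may take $cX=\beta X$) together with compactness of $cX$.

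For ``representation $\Rightarrow$ covers'' I would fix a witnessing representation of $X$ inside $cX$: $X=\bigcap_n G_n$ with $G_n$ open (case 1); $X=\bigcap_n\bigcup_m F_{n,m}$ with each $F_{n,m}$ closed in $cX$ (case 2); a Suslin scheme $(K(s))_s$ of compact subsets of $cX$ with $X=\bigcup_{\sigma\in\N^\N}\bigcap_n K(\sigma|_n)$ (case 3). At level $n$ let $\mc F_n$ consist of: the traces on $X$ of open $V\subseteq cX$ with $\overline V^{cX}\subseteq G_n$ (case 1; such $V$ exist by regularity of $cX$ and cover $X$); the sets $F_{n,m}\cap X$, $m\in\N$ (case 2); the sets $K(s)\cap X$, $s\in\N^n$ (case 3). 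Then $\mc F_n$ is a cover of $X$ of the required type (open; countable closed, hence also countable $F_\sigma$; countable). Completeness is immediate: given a filter $\mc G$ on $X$ meeting every $\mc F_n$, view it as a filter on the compact space $cX$ to get a cluster point $p\in cX$; the member of $\mc F_n$ that lies in $\mc G$ has its $cX$-closure inside $G_n$ (resp. inside $\bigcup_m F_{n,m}$, resp. inside some $K(s_n)$), and a cluster point of $\mc G$ lies in the closure of every member of $\mc G$; hence $p\in\bigcap_n G_n=X$ in case 1 (and $p\in X$ by the defining property of the scheme in case 3, after arranging the $s_n$ to form a branch), so $p$ is a cluster point of $\mc G$ inside $X$.

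For ``covers $\Rightarrow$ representation'' I would embed $X$ in $cX=\beta X$ and, from $\mc F_n=\{A_{n,m}\}_m$, set $S_n:=\bigcup_m\widetilde{A_{n,m}}$, where $\widetilde{A_{n,m}}$ is an open subset of $cX$ with trace $A_{n,m}$ on $X$ in case (1) --- so $S_n$ is open --- and is $\overline{A_{n,m}}^{cX}$ in cases (2) and (3) --- so $S_n$ is an $F_\sigma$ set, in fact a countable union of compacta. In all cases $X\subseteq S_n$, and it suffices to prove $\bigcap_n S_n=X$: this makes $X$ a dense $G_\delta$ in $cX$ (case 1, i.e. $X$ Čech-complete), a dense $\fsd$ subset of $cX$ (case 2), resp. exhibits $X$ by a Suslin operation on compacta (case 3). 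For ``$\subseteq$'', given $p\in\bigcap_n S_n$ one builds a filter on $X$ that meets every $\mc F_n$ and has $p$ as its only possible cluster point in $cX$; completeness then forces a cluster point in $X$, so $p\in X$. In case (1) this filter is just the trace on $X$ of the neighbourhood filter of $p$: since $p$ lies in one of the \emph{open} sets $\widetilde{A_{n,m(n)}}$, this trace filter contains $A_{n,m(n)}\in\mc F_n$, and any cluster point $x\in X$ of the trace filter of a point in a Hausdorff space must equal that point, so $x=p\in X$.

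The main obstacle is cases (2) and (3) of this last half, where the naive construction fails: when $S_n$ is merely $F_\sigma$, ``$p\in S_n$'' only gives $p\in\overline{A_{n,m(n)}}^{cX}$, which is not a neighbourhood of $p$, and since closure in $cX$ does not commute with even finite intersections, the family $\{A_{n,m(n)}:n\in\N\}$ together with the traces of neighbourhoods of $p$ need not have the finite intersection property, so there is no obvious filter to feed into completeness. The standard remedies --- which I would implement, thereby reproducing Frolík's argument --- are: first, reduce to covers that refine one another and are closed under finite intersections (this preserves completeness by upward closure of filters, and simultaneously reduces the ``$F_\sigma$ cover'' formulation in (2) to the ``closed cover'' one, by refining each $F_\sigma$ cover element to the countable family of its closed pieces); and second, arrange the covers to consist of cozero sets and work in $\beta X$, where closures of the corresponding zero-sets do commute with finite intersections, so that the required filter can be manufactured and the cluster-point argument of case (1) applies. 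Case (3) is handled by the analogous combinatorial refinement of a Suslin scheme; executing these reductions carefully is the technical heart of the proof, and for case (3) in particular one may simply defer to the literature on $\mc K$-analytic spaces.
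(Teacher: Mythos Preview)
The paper does not actually prove this proposition: it cites Frol\'{\i}k for parts 1 and 3 and for the first equivalence in part 2, and obtains the equivalence between countable closed covers and countable $F_\sigma$ covers as an immediate consequence of Lemma~\ref{lemma: refinement} (refine each $F_\sigma$ element to its countably many closed pieces; refinements of complete sequences are complete). So your sketch already goes well beyond what the paper does, and the overall architecture you describe is the right one.

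There is, however, a genuine soft spot in your remedy for the hard direction of case~2. You write ``arrange the covers to consist of cozero sets and work in $\beta X$, where closures of the corresponding zero-sets do commute with finite intersections''. First, ``cozero'' is a slip: cozero sets are open, and what you need are zero sets (or, more directly, closed sets whose $\beta X$-closures satisfy $\overline{A}\cap\overline{B}=\overline{A\cap B}$). Second, and more importantly, you do not say why such a reduction is available: in a bare Tychonoff space not every closed set is a zero set, and a countable closed cover need not refine to a countable zero-set cover. The missing ingredient is \emph{normality} of $X$. Once $X$ is normal one has $\overline{A}^{\beta X}\cap\overline{B}^{\beta X}=\overline{A\cap B}^{\beta X}$ for \emph{all} closed $A,B\subset X$ (separate $A\cap K$ and $B\cap K$ for a suitable closed neighbourhood $K$ of the putative bad point and use that completely separated sets have disjoint $\beta X$-closures), so no change of covers is needed and your filter argument goes through verbatim. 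Normality in turn follows from Lindel\"ofness, which follows from case~3 (a space with a complete sequence of countable covers is $\mc K$-analytic, hence Lindel\"of). Thus the hard half of case~2 genuinely leans on case~3; this dependency should be made explicit rather than hidden behind an unexplained reduction to zero-set covers. With that correction, your plan does reproduce Frol\'{\i}k's argument.
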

The equivalence between first and second part of $2.$ is easy, and follows from Lemma \ref{lemma: refinement}. The remaining assertion are due to Frolík (\cite{frolik1960generalizations}, \cite[Theorem 7]{frolik1963descriptive} and \cite[Theorem 9.3]{frolik1970survey}).  To get our main result, we will prove a statement which has a similar flavor:
\begin{proposition}\label{proposition: fs disjoint covers and abs fsd}
Any topological space with a complete sequence of countable disjoint $F_\sigma$ covers is absolutely $\fsd$.
\end{proposition}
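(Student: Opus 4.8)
\emph{Approach.} By the second part of Remark~\ref{remark: absolute complexity} it suffices to fix an arbitrary compactification $cX$ of $X$ and show $X\in\fsd(cX)$; concretely, the plan is to produce $F_\sigma$ subsets $A_1,A_2,\dots$ of $cX$ with $X\subseteq A_n$ for every $n$ and $\bigcap_nA_n=X$. Since a space with a complete sequence of countable covers is $\mc K$-analytic (Proposition~\ref{proposition: fsd iff complete sequence of covers}), $X$ is Lindelöf, so Lemma~\ref{lemma: fsd spaces are Fs separated} is available: every compact subset of $cX\setminus X$ can be pushed off $X$ by an $F_\sigma$ set of $cX$. The inclusion $X\subseteq\bigcap_nA_n$ will be built into the construction; the whole content is to arrange that for each $p\in cX\setminus X$ there is a level $n$ with $p\notin A_n$, and for this completeness must be invoked in the right way.

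\emph{Using disjointness.} Write $\mc F_n=\{F_n^k:k\in\N\}$. Disjointness is used twice. First, $X\setminus F_n^k=\bigcup_{j\ne k}F_n^j$ is again $F_\sigma$, so each $F_n^k$ is simultaneously $F_\sigma$ and $G_\delta$ in $X$; choose closed subsets $S_n^{k,l}$ of $X$ with $S_n^{k,l}\subseteq S_n^{k,l+1}$ and $\bigcup_lS_n^{k,l}=F_n^k$. Second — and this is the key point — putting $C_n^m:=\bigcup_{k\le m}S_n^{k,m}$ gives an increasing sequence of closed subsets of $X$ with $\bigcup_mC_n^m=X$ on which $\mc F_n$ restricts to a \emph{finite partition of $C_n^m$ into relatively clopen sets} $S_n^{1,m},\dots,S_n^{m,m}$ (they are pairwise disjoint, closed, and finitely many inside $C_n^m$). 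Passing to $cX$: for fixed $n,m$ the compact sets $\overline{S_n^{k,m}}^{\,cX}$ $(k\le m)$ have pairwise intersections disjoint from $X$ (their traces on $X$ are the pairwise disjoint $S_n^{k,m}$), so Lemma~\ref{lemma: fsd spaces are Fs separated} yields an $F_\sigma$ set $G_n^m\supseteq X$ on which these closures become genuinely disjoint. This produces an $F_\sigma$ set $B_n^m:=\overline{C_n^m}^{\,cX}\cap G_n^m$ meeting $X$ exactly in $C_n^m$ and splitting into the $F_\sigma$ pieces $\overline{S_n^{k,m}}^{\,cX}\cap G_n^m$.

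\emph{Assembly, and the main obstacle.} One then assembles the $A_n$ from the sets $B_n^m$ (a countable union over $m$, so $A_n$ is $F_\sigma$ and contains $X=\bigcup_mC_n^m$), with the $G_n^m$ chosen by a diagonalisation in $m$. To eliminate a point $p\in cX\setminus X$ one uses that its trace filter $\mc G_p=\{V\cap X:V$ a neighbourhood of $p$ in $cX\}$ has no accumulation point in $X$ (if $x\in X$ then $x\ne p$, and Hausdorffness of $cX$ separates them). Feeding this into completeness — applied not to $(\mc F_n)$ directly but to the closed refinement $\bigl(\{S_n^{k,l}:k,l\in\N\}\bigr)_n$, which is again a complete sequence of countable closed covers — we obtain a level $n$ at which $\mc G_p$ contains no member $S_n^{k,l}$, and this, coupled with the finite clopen partitions of the $C_n^m$ and the disjointness of pieces arranged above, should force $p\notin B_n^m$ for every $m$, hence $p\notin A_n$. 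I expect this last step to be the genuine difficulty. The naive reading of ``$\mc G_p$ misses $\mc F_n$'' — namely $p\in\bigcap_k\overline{X\setminus F_n^k}^{\,cX}$ — carries no information, since those closures can be all of $cX$ (for example when the $F_n^k$ have empty interior in $X$); one must instead combine the completeness datum with the clopen-exhaustion structure and the diagonal choice of the $G_n^m$. It is here that disjointness is indispensable: running the same scheme against a general complete sequence of countable closed covers must fail, for by Proposition~\ref{proposition: fsd iff complete sequence of covers} it would make every $\fsd$ space absolutely $\fsd$, contradicting Talagrand's example.
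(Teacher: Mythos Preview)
Your proposal is explicitly incomplete: you flag the last step as ``the genuine difficulty'' and do not carry it out. This is not a minor gap --- it is the entire content of the argument, and the route you set up does not obviously lead there.

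Two structural problems. First, applying completeness to the trace filter $\mc G_p$ yields a level $n$ at which no $S_n^{k,l}$ belongs to $\mc G_p$; concretely, no neighbourhood of $p$ has its trace on $X$ contained in any $S_n^{k,l}$. As you yourself observe, this says nothing about whether $p$ lies in $\overline{S_n^{k,l}}^{\,cX}$, so it does not help to exclude $p$ from your sets $B_n^m$. Second, and more fundamentally, you build each $A_n$ from level-$n$ data only. But completeness is a condition relating \emph{all} levels simultaneously, and the exclusion of a point $p\in cX\setminus X$ typically requires comparing closed pieces coming from \emph{different} levels. A purely level-wise $A_n$ cannot see this.

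The paper's proof supplies exactly the three ingredients you are missing. \emph{(a) Refinement:} using Lemma~\ref{lemma: refinement} one first arranges that each $\mc D_{n+1}$ refines $\mc D_n$; together with disjointness this forces any two members of $\bigcup_n\mc D_n$ to be either nested or disjoint. \emph{(b) Lemma~\ref{lemma: x is in X and c.s.of c.} in place of the trace-filter argument:} one never reasons with $\mc G_p$ directly; instead, a nested choice $D^1_{m_1}\supset D^2_{m_2}\supset\cdots$ automatically has $\bigcap_n\overline{D^n_{m_n}}\subset X$. \emph{(c) Cross-level separations:} for every \emph{disjoint} pair $D,E$ of closed pieces (your $S_n^{k,l}$), possibly from different levels, $\overline D\cap\overline E$ is a compact subset of $cX\setminus X$, so Lemma~\ref{lemma: fsd spaces are Fs separated} gives an $F_\sigma$ set $H_{D,E}$ with $X\subset H_{D,E}\subset cX\setminus(\overline D\cap\overline E)$. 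The formula is then
\[
X \;=\; \bigcap_n\ \bigcup\bigl\{\overline{D}:D\in\widetilde{\mc D_n}\bigr\}\ \cap\ \bigcap\bigl\{H_{D,E}:D,E\in\widetilde{\mc D}\text{ disjoint}\bigr\},
\]
where $\widetilde{\mc D_n}=\{S_n^{k,l}:k,l\in\N\}$ and $\widetilde{\mc D}=\bigcup_n\widetilde{\mc D_n}$. If $x$ lies in the right-hand side, one picks for each $n$ a piece $S_n^{k_n,l_n}$ with $x\in\overline{S_n^{k_n,l_n}}$; the $H_{D,E}$ constraints (including the cross-level ones) force the containing $\mc D_n$-members $D^n_{k_n}$ to be nested, and Lemma~\ref{lemma: x is in X and c.s.of c.} gives $x\in X$.

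Your sets $G_n^m$ amount to the \emph{same-level} $H_{D,E}$'s, which is why the argument stalls: the decisive separations are between pieces from adjacent levels, and these never enter a level-by-level construction.
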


We will need the following observation:
\begin{lemma}\label{lemma: refinement} Let $X$ be a topological space.
\begin{enumerate}
\item If $\left( \mc F_n \right)_{n\in\mathbb N}$ is a complete sequence of covers on $X$ and for each $n\in\mathbb N$, the cover $\mc F_n'$ is a refinement of $\mc F_n$, then the sequence $\left( \mc F_n' \right)_{n\in\mathbb N}$ is complete.
\item If $X$ has a complete sequence of countable closed (open, $F_\sigma$) covers, then it also has a complete sequence of countable closed (open, $F_\sigma$) covers $\left( \mc F_n \right)_{n\in\mathbb N}$, in which each $\mc F_{n+1}$ refines $\mc F_n$.
\end{enumerate}
\end{lemma}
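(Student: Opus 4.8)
The plan is to prove the two parts in order, with the first part carrying essentially all of the content and the second being an immediate application of it. For part 1, I would argue straight from the definition of completeness. Assume $(\mc F_n)_{n\in\N}$ is complete and that each $\mc F_n'$ refines $\mc F_n$. Let $\mc G$ be an arbitrary filter on $X$ that intersects every $\mc F_n'$, i.e.\ for each $n$ there is some $F_n'\in\mc F_n'\cap\mc G$. Since $\mc F_n'$ refines $\mc F_n$, choose $F_n\in\mc F_n$ with $F_n'\subseteq F_n$; because $\mc G$ is closed under supersets, $F_n\in\mc G$. Thus $\mc G$ intersects every $\mc F_n$, so by completeness of $(\mc F_n)_{n\in\N}$ it has an accumulation point in $X$. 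Hence $(\mc F_n')_{n\in\N}$ is complete. Note that the only filter axiom used here is closure under supersets.

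For part 2, I would start from a complete sequence $(\mc G_n)_{n\in\N}$ of countable closed (resp.\ open, $F_\sigma$) covers and pass to its sequence of ``partial common refinements'':
\[ \mc F_n := \bigl\{\, G_1\cap G_2\cap\cdots\cap G_n \ : \ G_i\in\mc G_i \text{ for each } i\le n \,\bigr\}. \]
Then each $\mc F_n$ is again a cover of $X$ (given $x\in X$, pick for each $i\le n$ some $G_i\in\mc G_i$ with $x\in G_i$); it is countable, being indexed by the countable set $\mc G_1\times\cdots\times\mc G_n$; and it consists of closed (resp.\ open, $F_\sigma$) sets, since each of these classes is stable under finite intersections. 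Moreover $\mc F_{n+1}$ refines $\mc F_n$, because $G_1\cap\cdots\cap G_{n+1}\subseteq G_1\cap\cdots\cap G_n$. Finally $\mc F_n$ refines $\mc G_n$ (drop all factors but the last), so part 1, applied to the complete sequence $(\mc G_n)_{n\in\N}$, shows that $(\mc F_n)_{n\in\N}$ is complete. This is the desired refining complete sequence.

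I do not expect any genuine obstacle; this is a bookkeeping lemma. The only points needing a word of care are that the relevant filter property (closure under supersets) is invoked exactly where claimed in part 1, and that in part 2 both ``countable'' and the ambient class of sets (closed, open, or $F_\sigma$) are preserved by the finite-intersection construction. If one insists on covers not containing the empty set, one may simply discard any empty members of $\mc F_n$; since a filter never contains $\emptyset$, this changes nothing in the argument.
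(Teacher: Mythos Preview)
Your proof is correct and follows essentially the same approach as the paper: part~1 is argued directly from the definition (the paper merely asserts this), and part~2 builds the refining sequence by taking common refinements $\mc G_1\wedge\cdots\wedge\mc G_n$ and then invokes part~1. The only cosmetic difference is that the paper writes the same construction recursively as $\mc F_{n+1}:=\mc F_{n+1}'\wedge\mc F_n$ rather than in the closed form you give.
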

\begin{proof}
The first part follows from the definition of complete sequence of covers of $X$. For the second part, let $\left( \mc F_n' \right)_{n\in\mathbb N}$ be a complete sequence of covers of $X$. We define the new sequence of covers as the refinement of $\left( \mc F_n' \right)_{n\in\mathbb N}$, setting $\mc F_1:=\mc F_1'$ and
\[ \mc F_{n+1}:=\mc F_{n+1}'\wedge\mc F_n := \left\{ F'\cap F|\ F'\in\mc F_n',\ F\in\mc F_{n+1} \right\}. \]
Clearly, the properties of being countable and closed (or $F_\sigma$) are preserved by this operation.
\end{proof}

The main reason for the use of complete sequences of covers is the following lemma:
\begin{lemma}\label{lemma: x is in X and c.s.of c.}
Let $\left( \mc F_n \right)_{n\in\mathbb N}$ be a complete sequence of covers of $X$ and $cX$ a compactification of $X$. If a sequence of sets $F_n\in\mc F_n$ satisfies $F_1\supset F_2\supset \dots$, then $\bigcap_{n \in\mathbb N}\overline{F_n}^{cX} \subset X$.
\end{lemma}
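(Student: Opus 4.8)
The plan is to take an arbitrary point $x \in \bigcap_{n}\overline{F_n}^{cX}$ and manufacture a filter on $X$ which meets every $\mc F_n$; completeness will then hand us an accumulation point of this filter inside $X$, and a separation argument will force that point to be $x$, so that $x \in X$.

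First I would form the trace on $X$ of the neighbourhood filter of $x$ in $cX$, that is, the family $\{U \cap X : U \text{ open in } cX,\ x \in U\}$; since $X$ is dense in $cX$ every such set is nonempty. I would then adjoin to this family the sets $F_n$ themselves. The crucial observation — and precisely where the hypothesis $F_1 \supset F_2 \supset \cdots$ is used — is that the enlarged family still has the finite intersection property: a finite intersection of members $F_{n_1},\dots,F_{n_k}$ is just $F_m$ with $m=\max\{n_1,\dots,n_k\}$, a finite intersection of neighbourhood traces is again a trace $U \cap X$ with $U$ an open neighbourhood of $x$, and since $x \in \overline{F_m}^{cX}$ we get $U \cap F_m \ne \emptyset$. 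Hence this family generates a filter $\mc G$ on $X$, and because $F_n \in \mc G \cap \mc F_n$ for every $n$, the filter $\mc G$ intersects all the $\mc F_n$.

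By completeness of $(\mc F_n)_n$, the filter $\mc G$ has an accumulation point $y \in X$. To conclude I would show $y = x$ as points of $cX$ (under the identification of $X$ with its image). If $y \ne x$, then, $cX$ being compact Hausdorff, pick disjoint open sets $U \ni x$ and $W \ni y$ in $cX$; then $W \cap X$ is a neighbourhood of $y$ in $X$, so by the accumulation property it must meet the element $U \cap X$ of $\mc G$, contradicting $U \cap W = \emptyset$. Therefore $x = y \in X$, which is exactly the assertion.

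I do not anticipate a genuine obstacle: the only points demanding a little care are the verification of the finite intersection property (the sole place the monotonicity of $(F_n)_n$ enters) and the elementary Hausdorff-separation step that pins the accumulation point down to $x$.
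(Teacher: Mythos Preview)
Your proposal is correct and follows essentially the same route as the paper: both build a filter from the neighbourhood system of $x$ together with the sets $F_n$, note that monotonicity of $(F_n)_n$ is exactly what gives the finite intersection property, invoke completeness to get an accumulation point $y\in X$, and then use Hausdorff separation in $cX$ to force $y=x$. The only cosmetic difference is that the paper writes down the basis $\{U\cap F_n\}$ directly, whereas you describe the same filter via the subbasis $\{U\cap X\}\cup\{F_n\}$.
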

\begin{proof}
Fix $x\in cX$. We observe that the family
\[ \mc B:=\left\{ U \cap F_n|\ U\in\mc U\left(x\right),\ n\in\mathbb N \right\} \]
is, by hypothesis, formed by nonempty sets and closed under taking finite intersections, therefore it is a basis of some filter $\mc F$ (note that this is the only step where we use the monotonicity of $(F_n)_n$). Since every $F_n$ belongs to both $\mc F$ and $\mc F_n$, $\mc F$ must have some accumulation point $y$ in $X$. If $x$ and $y$ were distinct, they would have some neighborhoods $U$ and $V$ with disjoint closures. This would imply that $V\in\mc U\left (y\right )$, $U\supset U\cap F_1\in \mc F$ and $V\cap\overline{U}$, which contradicts the definition of $y$ being an accumulation point of $\mc F$. This means that $x$ is equal to $y$ and, in particular, $x$ belongs to $X$.
\end{proof}

The property of being hereditarily Lindelöf will be used in the following way:
\begin{lemma}\label{lemma: countable network and disjoint cover}
Every hereditarily Lindelöf $\fsd$ space $X$ has a complete sequence of countable disjoint $F_\sigma$ covers.
\end{lemma}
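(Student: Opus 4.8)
The plan is to start from a complete sequence of countable \emph{closed} covers — which is available by Proposition \ref{proposition: fsd iff complete sequence of covers}, since $X$ is $\fsd$ — and to replace each such cover by a disjoint one using the standard disjointification $G_k := F_k \setminus \bigcup_{j<k} F_j$. The only nonroutine point will be checking that the pieces $G_k$ are again $F_\sigma$ in $X$, and this is precisely where hereditary Lindelöfness enters.

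First I would establish the auxiliary fact that in $X$ every open set is $F_\sigma$. Given $U \subseteq X$ open, regularity of $X$ (it is Tychonoff) provides for each $x \in U$ an open $V_x \ni x$ with $\overline{V_x} \subseteq U$; since $U$, being open in the hereditarily Lindelöf space $X$, is itself Lindelöf, the cover $\{V_x : x\in U\}$ of $U$ has a countable subcover $\{V_{x_n}\}_{n\in\N}$, and then $U = \bigcup_n V_{x_n} \subseteq \bigcup_n \overline{V_{x_n}} \subseteq U$ exhibits $U$ as an $F_\sigma$ subset of $X$.

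Next, fixing a complete sequence $(\mc F_n)_{n\in\N}$ of countable closed covers $\mc F_n = \{F^n_k : k\in\N\}$ of $X$, I would set $G^n_k := F^n_k \cap \bigcap_{j<k}(X\setminus F^n_j)$. Each $G^n_k$ is the intersection of a closed set with finitely many open sets, hence — by the auxiliary fact and the stability of the class $F_\sigma$ under finite intersections — an $F_\sigma$ subset of $X$; moreover the family $\mc G_n := \{G^n_k : k\in\N\}$ is a countable pairwise disjoint cover of $X$ (assign to each $x\in X$ the least $k$ with $x\in F^n_k$). Since $G^n_k \subseteq F^n_k$, each $\mc G_n$ refines $\mc F_n$, so Lemma \ref{lemma: refinement}(1) yields that $(\mc G_n)_{n\in\N}$ is again complete. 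This is the required complete sequence of countable disjoint $F_\sigma$ covers.

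The hard part is really only the auxiliary fact; everything else is bookkeeping. It is worth pointing out that this fact genuinely needs more than Lindelöfness — an open subspace of a Lindelöf space need not be Lindelöf, and an open set need not be $F_\sigma$ — which is why this argument uses hereditary Lindelöfness in an essential way rather than just Lindelöfness of $X$ itself.
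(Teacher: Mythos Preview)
Your proof is correct and follows essentially the same route as the paper: start from a complete sequence of countable closed covers (Proposition~\ref{proposition: fsd iff complete sequence of covers}), disjointify via $F^n_k \setminus \bigcup_{j<k} F^n_j$, use hereditary Lindel\"ofness to see that open sets are $F_\sigma$ so the pieces remain $F_\sigma$, and invoke Lemma~\ref{lemma: refinement}(1) for completeness. The only difference is that you spell out the argument for ``open $\Rightarrow F_\sigma$'' in detail, whereas the paper simply recalls it.
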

\begin{proof}
Recall that in a hereditarily Lindelöf space, every open set can be written as a countable union of closed sets. Consequently, the difference of two closed sets is always an $F_\sigma$ set.

Let $(\mc F_n )_n$ be a complete sequence of countable closed covers of $X$ (which exists by Proposition \ref{proposition: fsd iff complete sequence of covers}). We enumerate each of the covers $\mc F_n$ as $\mc F_n=\left\{ F^n_k| \ k\in\N \right\}$. Modifying each $\mc F_n$ in the standard way, we obtain disjoint covers of $X$:
\[ \widetilde {\mc F_n} := \left\{ F^n_k \setminus \left(F^n_1\cup\dots\cup F^n_{k-1} \right)|\ k\in\N \right\}.\]
By previous paragraph, each of these new covers consists of $F_\sigma$ sets. By $1.$ in Lemma \ref{lemma: refinement}, the sequence $\left( \widetilde {\mc F_n} \right)_n$ is complete.
\end{proof}

In order to get Theorem \ref{theorem: countable network}, it remains to prove Proposition \ref{proposition: fs disjoint covers and abs fsd}:
\begin{proof}[Proof of Proposition \ref{proposition: fs disjoint covers and abs fsd}]
Let $(\mc D_n)_n$ be the complete sequence of countable disjoint $F_\sigma$ covers of $X$. Without loss of generality, we can assume (by Lemma \ref{lemma: refinement}) that each $\mc D_{n+1}$ refines $\mc D_n$. Also, let $cX$ be a compactification of $X$. Since $cX$ is fixed, all closures will automatically be taken in this compactification.

We enumerate each cover as $\mc D_n=\left\{D^n_m|\ m\in\N \right\}$ and write each of its elements as countable union of closed sets: $D^n_m=\bigcup_i D^n_{m,i}$. We set $\widetilde { \mc D_n } := \left\{ D^n_{m,i} |\ m,i\in\N \right\}$ and $\widetilde { \mc D } :=\bigcup_n \widetilde { \mc D_n }$.

It is clear that $X\subset \bigcap_n \bigcup \left\{ \overline{D}|\ D\in \widetilde {\mc D_n} \right\}$. Note that the set on the right hand side is $\fsd$. The equality does not, in general, hold, but we can modify the right hand side using Lemma \ref{lemma: fsd spaces are Fs separated}.

Indeed, suppose that $x$ belongs to $\bigcap_n \bigcup \left\{ \overline{D}|\ D\in \widetilde {\mc D_n} \right\}$, but not to $X$. By Lemma \ref{lemma: x is in X and c.s.of c.}, this means that there are sequences $(m_n)_n$ and $(i_n)_n$, such that $x\in\bigcap_n \overline{D^n_{m_n,i_n}}$, but $D^1_{m_1,i_1}\supset D^2_{m_2,i_2}\supset \dots$ does \emph{not} hold. In particular, $x\in\overline D\cap \overline E$ holds for some $D,E\in\widetilde {\mc D}$ disjoint.

Since both $D$ and $E$ are closed in $X$, we have $\overline D \cap \overline E \subset cX \setminus X$. This means we can use Lemma \ref{lemma: fsd spaces are Fs separated} to obtain an $F_\sigma$ subset $H_{D,E}$ of $cX$ satisfying $X\subset H_{D,E}\subset cX\setminus \left( \overline D \cap \overline E \right)$. We claim that
\[ X = \bigcap_n \bigcup \left\{ \overline{D}|\ D\in \widetilde {\mc D_n} \right\} \cap \bigcap \left\{H_{D,E}|\  D,E\in\widetilde {\mc D} \textrm{ disjoint} \right\}. \]
By definition of $H_{D,E}$, the set on the right side contains $X$, and the  opposite inclusion follows from the observation above. Since $\widetilde {\mc D}$ is countable, the right hand side is $\fsd$. This proves that that $X\in\fsd \left(cX\right)$, which completes the proof (and also the whole section).
\end{proof}

\section{Conclusion}
We have shown that being hereditarily Lindelöf is a sufficient condition for an $\fsd$ space to be absolutely $\fsd$ - this is a fairly useful condition for applications.
The problem of finding the description of absolute $\fsd$ spaces remains yet unsolved, but we have gotten one step closer to the characterization: By Frolík's result, absolutely $\fsd$ space must have a complete sequence of countable $F_\sigma$ covers. If a space has such a sequence of covers which are also disjoint, then it must be absolutely $\fsd$. Therefore, if the desired characterization can be formulated in terms of complete sequences of covers, it must be something between these two conditions.

\bibliography{fsdrefs}
\end{document}